\newtheorem{theorem}{Theorem}
\newtheorem{lemma}[theorem]{Lemma} 
\newtheorem{proposition}[theorem]{Proposition} 
\newtheorem{corollary}[theorem]{Corollary}
\theoremstyle{definition} 
\newtheorem*{definition}{Definition} 
\newtheorem{example}[theorem]{Example} 
\newtheoremstyle{named}%
  {}{}						
  {\upshape}				
  {0pt}{\bfseries}			
  {.}						
  {.5em}					
  {\thmname{#1}\thmnote{ #3}}  
\theoremstyle{named}
\DeclareSymbolFont{tipa}{T3}{cmr}{m}{n}
\DeclareMathAccent{\invbreve}{\mathalpha}{tipa}{16}
\newcommand{\wo}{\setminus} 
\newcommand{\down}[1]{\left\lfloor{#1}\right\rfloor} 
\newcommand{\card}{\#}
\newcommand{\set}[1]{\left\{{#1}\right\}} 
\newcommand{\setof}[2]{\left\{{#1}\,:\,{#2}\right\}}
\newcommand{\of}{\subseteq}
\newcommand{\union}{\bigcup}
\renewcommand{\P}{\mathbb{P}}
\newcommand{\A}{\mathcal{A}}
\renewcommand{\L}{\mathcal{L}}
\newcommand{\Hy}{\mathcal{H}}
\newcommand{\N}{\mathcal{N}}
\newcommand{\Nc}{\mathcal{N}_c}
\newcommand{\Ra}{\mathcal{R}}
\renewcommand{\Pr}{\mathcal{P}}
\newcommand{\hind}{H_{\text{ind}}}
\newcommand{\Eq}[1][q]{E^{\circ}_{#1}}
\newcommand{\D}{\mathcal{D}}
\renewcommand{\l}{\ell}
\newcommand{\lc}{\ell_c}
\newcommand{\la}{\lambda}
\newcommand{\chr}{\invbreve{\chi}}
\newcommand{\wl}{w^{\la}}
\newcommand{\Nl}{N^{\la}}
\renewcommand{\ll}{\l^{\la}}
\newcommand{\llc}{\lc^{\la}}
\DeclareMathOperator{\XHom}{XHom}
\DeclareMathOperator{\xhom}{xhom}
\DeclareMathOperator{\Hom}{Hom}
\DeclareMathOperator{\range}{range}
\DeclareMathOperator{\ds}{ds}
\DeclareMathOperator{\sds}{sds}
\DeclareMathOperator{\id}{id}
\DeclarePairedDelimiter{\abs}{\lvert}{\rvert}
\begin{document}

\title{Counting dominating sets and related structures in graphs}
\author{Jonathan Cutler}
\address[Jonathan Cutler]{Department of Mathematical Sciences, Montclair State University, Montclair, NJ 07043 USA}
\email{jonathan.cutler@montclair.edu}
\author{A.~J.~Radcliffe}
\address[A.~J.~Radcliffe]{Department of Mathematics, University of Nebraska-Lincoln, Lincoln, NE 68588 USA}
\email{jamie.radcliffe@unl.edu}
\maketitle
\begin{abstract}
	We consider some problems concerning the maximum number of (strong) dominating sets in a regular graph, and their weighted analogues.  Our primary tool is Shearer's entropy lemma.  These techniques extend to a reasonably broad class of graph parameters enumerating vertex colorings satisfying conditions on the multiset of colors appearing in (closed) neighborhoods.  We also generalize further to enumeration problems for what we call existence homomorphisms.  Here our results are substantially less complete, though we do solve some natural problems.
\end{abstract}

\maketitle

\section{Introduction} 
\label{sec:intro}

Many interesting problems arise when one asks which graphs maximize some graph invariant over a fixed class of graphs.  The history of these problems goes back to at least Mantel's theorem, where the class of graphs is that of triangle-free graphs on $n$ vertices and the graph invariant is simply the number of edges.  Recently, there has been a lot of interest in maximizing the number of independent sets in a graph from some class.  One of the earliest such results is due to Moon and Moser \cite{MM}.
\begin{theorem}[Moon, Moser]
	If $G$ is a graph on $n\geq 2$ vertices, then the number of maximal independent sets in $G$ is at most
	\[
		\begin{cases}
			3^{n/3} & \text{if $n\equiv 0 \pmod 3$},\\
			4\cdot 3^{\down{n/3}-1} & \text{if $n\equiv 1 \pmod 3$},\\
			2\cdot 3^{\down{n/3}} & \text{if $n\equiv 2 \pmod 3$}.
		\end{cases}
	\]
\end{theorem}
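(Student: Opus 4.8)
The plan is to establish the stated quantities as upper bounds by strong induction on $n$, with disjoint unions of triangles (suitably patched when $n \not\equiv 0 \pmod 3$) serving as the extremal examples. Write $g(n)$ for the claimed right-hand side and $m(G)$ for the number of maximal independent sets of a graph $G$; the goal is $m(G) \le g(n)$ for every $G$ on $n$ vertices. Two preliminary facts drive everything. First, comparing the three cases shows that $g$ is non-decreasing and satisfies $g(n) = 3\,g(n-3)$ for $n \ge 3$. Second, for any vertex $u$ the maximal independent sets of $G$ that contain $u$ are in bijection with the maximal independent sets of $G - N[u]$, via $I \mapsto I \setminus \{u\}$: independence is immediate, and maximality transfers in both directions by a short check (a vertex outside $N[u]$ that could be added to $I \setminus \{u\}$ could already be added to $I$). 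Hence the number of such sets equals $m(G - N[u]) \le g(n - |N[u]|)$.

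With these in hand I would verify the base cases $n \le 4$ by inspection and then run the inductive step. Let $v$ be a vertex of minimum degree $\delta$, so that $|N[v]| = \delta + 1$ and $|N[u]| \ge \delta + 1$ for every $u$. Since a maximal independent set cannot be enlarged, it must meet $N[v]$; as each such set is counted at least once in the sum below (once for each vertex of $N[v]$ it contains), we obtain
\[
 m(G) \;\le\; \sum_{u \in N[v]} m\bigl(G - N[u]\bigr) \;\le\; (\delta+1)\, g(n - \delta - 1),
\]
where the last step uses monotonicity of $g$ and the induction hypothesis. Everything now reduces to the arithmetic inequality $(\delta+1)\, g(n - \delta - 1) \le g(n)$ for all $0 \le \delta \le n-1$.

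This final inequality is where the real work lies, and it is the step I expect to be the main obstacle. Setting $t = \delta + 1$, it reads $t\, g(n-t) \le g(n)$, i.e.\ $g(n)/g(n-t) \ge t$. For large $t$ this is automatic, since $g$ grows exponentially (roughly like $3^{t/3}$) while $t$ is linear; the content is concentrated in the tight small cases $t \in \{1,2,3,4\}$, where the ratio must be computed exactly. The obstacle is the bookkeeping: one has to track $n \bmod 3$ through the floor functions defining $g$ and verify $g(n)/g(n-t) \ge t$ residue-by-residue. For $t = 3$ this is precisely the recursion $g(n) = 3\,g(n-3)$, while $t = 4$ with $n \equiv 1 \pmod 3$ is the sharpest case — and it is exactly this case that produces the anomalous factor $4$ in the bound when $n \equiv 1$, corresponding to a four-vertex component (such as $K_4$ or $2K_2$) in the extremal graph. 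Finally, chasing the equality conditions back through this computation identifies the extremal graphs and completes the proof.
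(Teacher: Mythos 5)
The paper itself gives no proof of this theorem: it is quoted as background and cited to Moon and Moser, so there is no internal argument to compare against, and your proposal must be judged on its own terms. On those terms it is correct, and it is essentially the standard inductive proof of this bound. Both of your preliminary facts are sound: the map $I \mapsto I \setminus \{u\}$ is indeed a bijection between maximal independent sets containing $u$ and maximal independent sets of $G - N[u]$ (maximality transfers both ways exactly as you indicate), and every maximal independent set must meet $N[v]$, so $m(G) \le \sum_{u \in N[v]} m(G - N[u]) \le (\delta + 1)\, g(n - \delta - 1)$, where overcounting sets meeting $N[v]$ more than once only helps an upper bound. The arithmetic core $t\, g(n-t) \le g(n)$ does hold and your reduction of it is the right one: since $g(n) = 3g(n-3)$, one checks $t \in \{1, \dots, 6\}$ residue by residue (the equality cases are $t = 2$ with $n \equiv 1, 2 \pmod 3$, $t = 3$ for every $n$, and $t = 4$ with $n \equiv 1 \pmod 3$, which is indeed the source of the factor $4$), and then $g(n)/g(n-t-3) = 3\, g(n)/g(n-t) \ge 3t \ge t + 3$ closes an induction on $t$. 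Two bookkeeping points should be made explicit in a full write-up. First, your recursion forces you to evaluate $g$ below the theorem's stated range: $G - N[u]$ can have $0$ or $1$ vertices, so you need $g(0) = 1$ and the non-integral $g(1) = 4/3$, and the bound survives there only because $m \le 1 \le g$ in both degenerate cases; your phrase ``base cases $n \le 4$ by inspection'' silently includes this. Second, the closing equality-chasing to identify extremal graphs is more delicate than your one sentence suggests (equality must propagate through the double-counting step as well as the arithmetic), but since the theorem as stated asserts only the upper bound, that part is dispensable and does not affect correctness.
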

Another example of such a result is the following beautiful theorem due to Kahn \cite{K} and Zhao \cite{Z} concerning $i(G)$, the number of independent sets in a graph $G$.

\begin{theorem}[Kahn, Zhao]\label{thm:KZ}
	If $G$ is an $r$-regular graph on $n$ vertices, then 
	\[
		i(G)\leq i(K_{r,r})^{n/(2r)}=(2^{r+1}-1)^{n/(2r)}.
	\]
\end{theorem}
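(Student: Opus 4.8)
The plan is to prove the bipartite case by an entropy argument (following Kahn) and then reduce the general regular case to it (following Zhao). Throughout, write $\mathcal{I}(G)$ for the set of independent sets of $G$, identify each independent set with its $\{0,1\}$-indicator vector, and let $I$ be a uniformly random element of $\mathcal{I}(G)$, so that $\log_2 i(G) = H(I)$ is the base-$2$ entropy of $I$; for a vertex set $S$ write $I_S$ for the restriction of $I$ to $S$. Since the paper's engine is Shearer's lemma, this is the natural route, and the entropy computation will be the substantive part.

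First I would treat the case where $G$ is bipartite with parts $X$ and $Y$, necessarily of equal size $n/2$. Splitting $I=(I_X,I_Y)$ and applying the chain rule gives $H(I)=H(I_X)+H(I_Y\mid I_X)$. To the first term I apply Shearer's lemma using the cover of $X$ by the neighborhoods $\{N(v):v\in Y\}$; since $G$ is $r$-regular each vertex of $X$ lies in exactly $r$ of these sets, so $H(I_X)\le \frac1r\sum_{v\in Y}H(I_{N(v)})$. To the second term I apply subadditivity of conditional entropy, $H(I_Y\mid I_X)\le\sum_{v\in Y}H(I_v\mid I_X)$, and then note that since $N(v)\subseteq X$ conditioning on all of $I_X$ only decreases entropy, so $H(I_v\mid I_X)\le H(I_v\mid I_{N(v)})$. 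Combining, $H(I)\le\sum_{v\in Y}\bigl[\tfrac1r H(I_{N(v)})+H(I_v\mid I_{N(v)})\bigr]$, and it suffices to bound each bracket by $\frac1r\log_2(2^{r+1}-1)$.

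The heart of the bipartite case is this local bound. Fix $v\in Y$ and set $p=\P[I_{N(v)}=\emptyset]$. Because $v\in I$ forces every neighbor of $v$ out of $I$, only the event $I_{N(v)}=\emptyset$ contributes to the conditional entropy of $I_v$, so $H(I_v\mid I_{N(v)})=p\cdot H(I_v\mid I_{N(v)}=\emptyset)\le p$. Thus each bracket is at most $\frac1r H(Z)+\P[Z=\emptyset]$ where $Z=I_{N(v)}$, and I am reduced to the purely analytic problem of maximizing $H(Z)+r\,\P[Z=\emptyset]$ over all distributions of a random subset $Z$ of an $r$-element set. A Lagrange-multiplier computation shows the maximizer puts mass proportional to $2^r$ on $\emptyset$ and proportional to $1$ on each of the remaining $2^r-1$ subsets, with maximum value exactly $\log_2(2^{r+1}-1)$. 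Summing $\frac1r$ times this over the $n/2$ vertices of $Y$ gives $H(I)\le\frac{n}{2r}\log_2(2^{r+1}-1)$, which is the claimed bound. It is a good sanity check that the extremal distribution is precisely the one realized by $K_{r,r}$, where every independent set lies in a single side, so that $i(K_{r,r})=2^{r+1}-1$.

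Finally I would remove the bipartiteness hypothesis using the bipartite double cover $G\times K_2$, which is again $r$-regular but now bipartite, on $2n$ vertices, and whose independent sets are exactly the cross-independent pairs $(A,B)$ of subsets of $V(G)$ with no edge of $G$ between $A$ and $B$. The key inequality is $i(G)^2\le i(G\times K_2)$, which I would establish by exhibiting an explicit injection $\mathcal{I}(G)^2\hookrightarrow\mathcal{I}(G\times K_2)$ (Zhao's bipartite swapping trick): from a pair $(I,J)$ of independent sets one builds a cross-independent pair by resolving conflicts along the connected components of $G[I\triangle J]$, each of which is bipartite. Granting this, applying the bipartite case to $G\times K_2$ yields $i(G)^2\le i(G\times K_2)\le(2^{r+1}-1)^{2n/(2r)}=(2^{r+1}-1)^{n/r}$, and taking square roots completes the proof. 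I expect the genuine obstacle to be exactly this reduction: the entropy argument is clean but intrinsically bipartite, and the delicate point is arranging the swapping map to be simultaneously conflict-free and invertible, so that it is a true injection rather than merely a well-defined assignment.
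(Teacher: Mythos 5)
Your proposal is correct and follows exactly the route this paper attributes to the original sources: the paper does not prove Theorem~\ref{thm:KZ} itself, but cites Kahn's entropy argument for the bipartite case and Zhao's bipartite double cover reduction, which is precisely your two-step plan. Your execution checks out as well --- the local optimization of $H(Z)+r\,\P[Z=\emptyset]$ indeed has maximum $\log(2^{r+1}-1)$, attained by the Gibbs-type distribution you describe, and the reduction via $i(G)^2\leq i(G\times K_2)$ with the swapping argument along the bipartite components of $G[I\bigtriangleup J]$ is Zhao's lemma, whose injectivity subtlety you correctly identify as the one point still requiring careful execution.
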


Kahn proved Theorem~\ref{thm:KZ} for bipartite graphs using entropy methods and Zhao extended this result to all regular graphs by a clever use of the bipartite double cover.  Motivated by a conjecture of Kahn about a weighted version of the bipartite case of Theorem~\ref{thm:KZ}, Galvin and Tetali \cite{GT} made the observation that independent sets are generalized by homomorphisms into a fixed image graph.  Recall that a \emph{homomorphism from $G$ to $H$} is a function $\phi:V(G)\to V(H)$ such that $xy\in E(G)$ implies $\phi(x)\phi(y)\in E(H)$.  Let $\Hom(G,H)$ be the set of homomorphisms from $G$ to $H$ and $\hom(G,H)=\abs{\Hom(G,H)}$.  If we let $\hind$ be the graph on two vertices with an edge between them and one of the vertices looped (see Figure~\ref{fig:hind}),
\begin{figure}[ht]
	\includegraphics{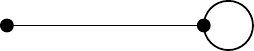}
	\caption{The graph $\hind$.}\label{fig:hind}
\end{figure}
then independent sets in $G$ correspond precisely to elements of $\Hom(G,\hind)$.  (An independent set $I$ corresponds to a homomorphism to $\hind$ in which the preimage of the unlooped vertex is $I$.)  Galvin and Tetali \cite{GT} proved the following.

\begin{theorem}[Galvin, Tetali]
	If $G$ is an $r$-regular bipartite graph on $n$ vertices and $H$ is any graph (which may have loops), then
	\[
		\hom(G,H)\leq \hom(K_{r,r},H)^{n/(2r)}.
	\]
\end{theorem}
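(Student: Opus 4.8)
The plan is to run an entropy argument built on Shearer's lemma (the paper's primary tool), along lines that can later be pushed to the more general setting. Let $\phi$ be a uniformly random element of $\Hom(G,H)$, write $\mathbb{H}(\cdot)$ for Shannon entropy, and take all logarithms to the same base, so that $\mathbb{H}(\phi)=\log\hom(G,H)$. Since $G$ is $r$-regular and bipartite, a double count of edges shows its two parts $X$ and $Y$ each have size $n/2$, and every vertex of $X$ has exactly $r$ neighbors in $Y$. For $S\of V(G)$ write $\phi_S=(\phi(v))_{v\in S}$. By the chain rule,
\[
\mathbb{H}(\phi)=\mathbb{H}(\phi_X)+\mathbb{H}(\phi_Y\mid\phi_X),
\]
and I would bound the two pieces separately.

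For the first piece, the neighborhoods $\set{N(y):y\in Y}$ are subsets of $X$ covering each $x\in X$ exactly $r$ times, so Shearer's lemma gives $\mathbb{H}(\phi_X)\le\frac1r\sum_{y\in Y}\mathbb{H}(\phi_{N(y)})$. For the second piece, the key structural observation is that, once $\phi_X$ is fixed, a homomorphism is completed by choosing, independently for each $y\in Y$, any color $c\in V(H)$ with $c\,\phi(x)\in E(H)$ for all $x\in N(y)$; these are the only constraints, because $Y$ is independent. Hence, conditioned on $\phi_X$, the colors $(\phi(y))_{y\in Y}$ are mutually independent and $\phi(y)$ depends on $\phi_X$ only through $\phi_{N(y)}$, so that $\mathbb{H}(\phi_Y\mid\phi_X)=\sum_{y\in Y}\mathbb{H}(\phi(y)\mid\phi_{N(y)})$. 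Combining,
\[
\mathbb{H}(\phi)\le\sum_{y\in Y}\left[\frac1r\,\mathbb{H}(\phi_{N(y)})+\mathbb{H}(\phi(y)\mid\phi_{N(y)})\right].
\]
As there are $n/2$ summands, it suffices to prove the local bound $\mathbb{H}(\phi_{N(y)})+r\,\mathbb{H}(\phi(y)\mid\phi_{N(y)})\le\log\hom(K_{r,r},H)$ for each $y$.

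This local inequality is the crux, and I expect it to be the main obstacle. The idea is to manufacture a random homomorphism $\psi\in\Hom(K_{r,r},H)$ whose entropy is exactly the left-hand side. Label the parts of $K_{r,r}$ as $L=\set{l_1,\dots,l_r}$ and $R=\set{r_1,\dots,r_r}$. First sample $(\psi(l_1),\dots,\psi(l_r))$ from the distribution of $\phi_{N(y)}$; then, conditioned on this, sample $\psi(r_1),\dots,\psi(r_r)$ independently, each from the conditional distribution of $\phi(y)$ given $\phi_{N(y)}$. Because any color in the support of $\phi(y)\mid\phi_{N(y)}$ is $H$-adjacent to every entry of $\phi_{N(y)}$, every $\psi(r_j)$ is adjacent to every $\psi(l_i)$, so $\psi$ is a genuine homomorphism. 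By the chain rule and the built-in conditional independence, $\mathbb{H}(\psi)=\mathbb{H}(\phi_{N(y)})+r\,\mathbb{H}(\phi(y)\mid\phi_{N(y)})$, while $\mathbb{H}(\psi)\le\log\abs{\Hom(K_{r,r},H)}=\log\hom(K_{r,r},H)$ since the uniform distribution maximizes entropy. This is precisely the local bound.

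Assembling the pieces yields $\log\hom(G,H)=\mathbb{H}(\phi)\le\frac{n}{2r}\log\hom(K_{r,r},H)$, which is the claim. The delicate point is the local step: one must choose the auxiliary distribution on $K_{r,r}$ so that its entropy reproduces the Shearer coefficient $1/r$ (hence exactly $r$ independent copies on the $R$-side), and must check that validity of the construction uses only the adjacency constraints already guaranteed by $\phi$. Everything else—the chain rule, Shearer's lemma, and maximality of the uniform distribution—is routine.
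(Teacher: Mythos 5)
Your proof is correct, but note that the paper does not prove this theorem at all: it is quoted from Galvin and Tetali \cite{GT}, so the only comparison available is with the cited argument and with the proofs the paper does supply for its own results. What you have written is essentially a faithful reconstruction of the Kahn/Galvin--Tetali entropy proof: chain rule over the bipartition, Shearer's lemma on the $X$-side with the cover $\setof{N(y)}{y\in Y}$ (each $x\in X$ covered exactly $r$ times), conditional independence of the $Y$-colors given $\phi_X$, and the auxiliary random homomorphism $\psi$ of $K_{r,r}$ whose entropy realizes $H(\phi_{N(y)})+rH(\phi(y)\mid\phi_{N(y)})$. It is worth seeing why this is genuinely more elaborate than the arguments the paper gives for the dominating set bound opening Section~\ref{sec:domsets} and for Theorem~\ref{thm:legalcount}: there a single application of Shearer's lemma suffices, because the objects counted are defined by an \emph{intrinsic} condition on the multiset of colors in each (closed) neighborhood, so each local entropy $H(\phi_{N(v)})$ is bounded directly by the logarithm of the number of admissible local patterns. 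For homomorphisms that one-shot argument breaks down --- which tuples $\phi_{N(y)}$ can occur is not a neighborhood-local condition, and the edge constraints couple the two sides --- so your extra machinery (chain rule plus the conditionally i.i.d.\ resampling of $r$ copies of $\phi(y)$ to build $\psi\in\Hom(K_{r,r},H)$, calibrated so that the factor $r$ matches the Shearer coefficient $1/r$) is exactly what is needed, and your verification that $\psi$ lands in $\Hom(K_{r,r},H)$ is the only place adjacency in $H$ is used. Two minor points for a polished write-up: dispose first of the degenerate case $\Hom(G,H)=\emptyset$ (where the inequality is trivial and no uniform $\phi$ exists, and which otherwise also guarantees $\hom(K_{r,r},H)\geq 1$); and your claimed equality $H(\phi_Y\mid\phi_X)=\sum_{y\in Y}H(\phi(y)\mid\phi_{N(y)})$ does hold as you argue --- given $\phi_X$ the completions form a product set, and the conditional law of $\phi(y)$ given $\phi_{N(y)}$ alone is uniform on the allowed colors --- though the inequality $\leq$, which is all the proof needs, follows more cheaply from subadditivity together with the fact that conditioning on less (namely $\phi_{N(y)}$ rather than $\phi_X$) can only increase entropy.
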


A variety of homomorphism counting problems correspond directly to statistical physics models.  For instance, $\Hom(G,\hind)$ is the hard-core model.  In this context, it is common to weight homomorphisms by giving an activity to each vertex of the image graph.  To be precise, we have the following.

\begin{definition}
	Given a (possibly looped) graph $H$ and a function $\la:V(H)\to (0,\infty)$, we define the \emph{weight} of a homomorphism $\phi\in \Hom(G,H)$ to be
	\[
		w^{\la}(\phi)=\prod_{v\in V(G)} \la(\phi(v)).
	\]
	The analogue of the number of homomorphisms is the total weight of all homomorphisms from $G$ to $H$.  We write
	\[
		Z^{\la}(G,H)=\sum_{\phi\in \Hom(G,H)} w^{\la}(\phi).
	\]
\end{definition}

Galvin and Tetali \cite{GT} also showed the following.

\begin{theorem}[Galvin, Tetali]
	If $G$ is an $r$-regular bipartite graph on $n$ vertices, $H$ is any graph (which may have loops), and $\la:V(H)\to (0,\infty)$, then
	\[
		Z^{\la}(G,H)\leq (Z^{\la}(K_{r,r},H))^{n/(2r)}.
	\]
\end{theorem}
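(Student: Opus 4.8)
The plan is to run an entropy argument in the style of Kahn, using Shearer's lemma, and to exploit the bipartite structure directly (so that no double cover is needed). Write $V(G) = A \cup B$ for the bipartition; since $G$ is $r$-regular and bipartite we have $\abs{A} = \abs{B} = n/2$, and the family of neighborhoods $\setof{N(b)}{b \in B}$ is an $r$-cover of $A$, because every $a \in A$ lies in exactly $r$ of these sets. I would choose a random homomorphism $\fl \in \Hom(G,H)$ with $\Pr(\fl) = \wl(\fl)/Z^{\la}(G,H)$, and write $X_v = \fl(v)$. Denoting Shannon entropy by $\mathbb{H}$, a direct computation gives
\[
	\log Z^{\la}(G,H) = \mathbb{H}(\fl) + \sum_{v \in V(G)} \mathbb{E}\bigl[\log \la(X_v)\bigr],
\]
so it suffices to bound the right-hand side by $\tfrac{n}{2r}\log Z^{\la}(K_{r,r},H)$.

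Next I would split the entropy along the bipartition. By the chain rule, $\mathbb{H}(\fl) = \mathbb{H}(X_A) + \mathbb{H}(X_B \mid X_A)$, where $X_A = (X_a)_{a \in A}$ and similarly for $B$. The crucial structural fact is that, conditioned on $X_A$, the values $\set{X_b}_{b \in B}$ are independent and each $X_b$ depends only on $X_{N(b)}$; this holds because the Gibbs weight factorizes over vertices and edges and there are no edges inside $B$. Hence $\mathbb{H}(X_B \mid X_A) = \sum_{b \in B} \mathbb{H}(X_b \mid X_{N(b)})$. For the $A$-part I would apply Shearer's lemma to the $r$-cover above, giving $\mathbb{H}(X_A) \leq \tfrac1r \sum_{b \in B} \mathbb{H}(X_{N(b)})$, and I would distribute the $A$-weights over the same cover, $\sum_{a \in A}\mathbb{E}[\log\la(X_a)] = \tfrac1r\sum_{b \in B}\sum_{a \in N(b)}\mathbb{E}[\log\la(X_a)]$. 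Collecting the terms indexed by $b$, the problem reduces to the per-vertex bound
\[
	\mathbb{H}(X_{N(b)}) + \sum_{a \in N(b)}\mathbb{E}[\log\la(X_a)] + r\,\mathbb{H}(X_b \mid X_{N(b)}) + r\,\mathbb{E}[\log\la(X_b)] \leq \log Z^{\la}(K_{r,r},H)
\]
for every $b \in B$; summing over the $n/2$ vertices of $B$ then yields the theorem after the normalization is checked.

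The main obstacle is this local inequality, which is where $K_{r,r}$ appears as the extremal configuration. I would interpret it through the Gibbs variational principle. Writing $\mathbf Y = X_{N(b)}$ (an $r$-tuple over $V(H)$) with distribution $\mu$, and letting $\nu_{\mathbf y}$ be the conditional law of $X_b$ given $\mathbf Y = \mathbf y$ (supported on common neighbors of the coordinates of $\mathbf y$), the left-hand side equals
\[
	-\sum_{\mathbf y}\mu(\mathbf y)\log\frac{\mu(\mathbf y)}{\prod_{i}\la(y_i)} - r\sum_{\mathbf y}\mu(\mathbf y)\sum_{z}\nu_{\mathbf y}(z)\log\frac{\nu_{\mathbf y}(z)}{\la(z)}.
\]
This is exactly the (constrained) free-energy functional whose maximum, over all choices of $\mu$ and $\set{\nu_{\mathbf y}}$, is $\log Z^{\la}(K_{r,r},H)$: a homomorphism $K_{r,r}\to H$ consists of an $r$-tuple $\mathbf y$ on one side together with $r$ vertices on the other side, each an independent common neighbor of $\mathbf y$, so $Z^{\la}(K_{r,r},H) = \sum_{\mathbf y}\prod_i\la(y_i)\,c(\mathbf y)^r$, where $c(\mathbf y)$ denotes the total $\la$-weight of the common neighborhood of $\mathbf y$. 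Because the maximizing Gibbs distribution for $K_{r,r}$ itself has its $B$-side coordinates conditionally i.i.d.\ given the $A$-side, the restriction to product-form distributions costs nothing, and the inequality follows from Jensen's inequality applied to the concave function $\log$. Establishing this variational identity cleanly, and verifying that the conditional-independence and Shearer bookkeeping reproduce the exponent $\tfrac{n}{2r}$, are the steps that require the most care.
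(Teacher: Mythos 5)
You should first note a mismatch with the premise of the exercise: the paper does not prove this theorem at all — it is quoted as background and attributed to Galvin and Tetali \cite{GT} — so there is no in-paper proof to compare against. Judged on its own merits, your proposal is correct, and it is essentially a reconstruction of Galvin and Tetali's original entropy argument (itself an adaptation of Kahn's proof for independent sets): the chain-rule split $H(\fl)=H(X_A)+H(X_B\mid X_A)$, conditional independence of the $B$-coordinates given $X_A$, Shearer's Lemma applied to the multiset $\setof{N(b)}{b\in B}$ (each $a\in A$ covered exactly $r$ times), and the identity $Z^{\la}(K_{r,r},H)=\sum_{\mathbf{y}}\prod_i \la(y_i)\,c(\mathbf{y})^r$ closing the per-vertex inequality; your normalization bookkeeping (multiply the local inequality by $r$, sum over the $n/2$ vertices of $B$) checks out. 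Three details deserve sharpening. First, the conditional independence of $\set{X_b}_{b\in B}$ given $X_A$ uses that $\wl$ factorizes over vertices, and the conditional law of $X_b$ given $X_A$ is $\la$ restricted to the common neighborhood of the values on $N(b)$; this dependence on $X_{N(b)}$ alone is exactly what justifies replacing $H(X_b\mid X_A)$ by $H(X_b\mid X_{N(b)})$, a step you assert but should verify. Second, on the support of $\mu$ one has $c(\mathbf{y})>0$, since the homomorphism extends to $b$, so $\log c(\mathbf{y})$ is finite where needed. Third, your closing appeal to a Gibbs variational principle and to product-form distributions is heavier than necessary: the local inequality follows from two bare applications of nonnegativity of relative entropy (equivalently, Jensen), first bounding the inner term by $\log c(\mathbf{y})$ for each fixed $\mathbf{y}$, then bounding the outer sum by $\log\sum_{\mathbf{y}}\prod_i\la(y_i)c(\mathbf{y})^r$. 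Finally, it is worth seeing why the paper's own, simpler technique does not directly apply: in Theorems~\ref{thm:legalcount} and~\ref{thm:legalwt} the legality condition is checked locally within each (closed) neighborhood, so a one-shot Shearer bound with each local entropy at most the logarithm of a count is already sharp, and weights are absorbed by scaling to integers and cloning colors. Homomorphism constraints instead couple $X_b$ to $X_{N(b)}$ across the bipartition, so the conditional-entropy machinery you deploy is genuinely needed; the paper's cloning trick would, however, reduce the weighted statement to the unweighted one (blow up each $w\in V(H)$ into $\la(w)$ twins), which could have spared you the activities throughout.
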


We are primarily concerned in this paper with bounds on the number of dominating sets and strong dominating sets in graphs, for instance in $r$-regular graphs on a given number of vertices or in graphs with a fixed number of vertices and edges.  Let us recall a few relevant definitions.  If $G$ is a graph, we let $N(v)$ and $N[v]$ be the open and closed neighborhoods of a vertex $v$, respectively.  Similarly, for $S\of V(G)$, we let $N(S)=\union_{v\in S} N(v)$ and $N[S]=\union_{v\in S} N[v]$.

\begin{definition}
	In a graph $G=(V,E)$, a set $S\of V$ is a \emph{dominating set} if $N[S]=V$.  We say $S$ is a \emph{strong dominating set} if $N(S)=V$.
\end{definition}

Fomin, Grandoni, Pyatkin, and Stepanov \cite{FGPS} were able to prove the following analogue of the Moon-Moser theorem, bounding the number of \emph{minimal} dominating sets in a graph.

\begin{theorem}[Fomin et al.]
	If $G$ is a graph on $n$ vertices, then the number of minimal dominating sets in $G$ is at most $1.7159^n$.
\end{theorem}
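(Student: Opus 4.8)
The plan is to follow the \emph{measure and conquer} paradigm, recasting the counting problem as a bound on the number of leaves of a branching search tree that enumerates all minimal dominating sets, exactly one per leaf. The starting point is the \emph{private neighbor} characterization of minimality: a dominating set $D$ is minimal if and only if every $v\in D$ has a private neighbor, i.e.\ a vertex $p\in N[v]$ with $N[p]\cap D=\set{v}$. Equivalently, regarding $D$ as a cover of $V(G)$ by the closed neighborhoods $\setof{N[v]}{v\in V(G)}$, the minimal dominating sets are precisely the irredundant covers, and it is this set-cover viewpoint that makes a clean recursive enumeration possible.

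First I would set up the recursion. At each node of the search tree we maintain a partition of $V(G)$ into vertices already placed in $D$, vertices decided to be excluded, and undecided vertices, together with bookkeeping that records which vertices are already dominated and which chosen vertices still lack a private neighbor. We branch on an undecided vertex $v$ of maximum residual degree, recursing on the two subproblems ``$v\in D$'' and ``$v\notin D$''. Pruning rules discard branches that can no longer produce a minimal dominating set --- for instance when an included vertex can never acquire a private neighbor, or when some vertex can no longer be dominated --- and a tie-breaking convention ensures each minimal dominating set is generated at exactly one leaf.

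The heart of the argument is the measure. Assign every vertex a weight $w(\cdot)\in[0,1]$ depending only on its current state (undecided versus dominated-but-undecided, and on its residual degree), and let $\mu$ be the total weight of a configuration, so that $\mu\leq n$ at the root. Each branching step reclassifies or deletes several vertices, decreasing $\mu$ by amounts $\Delta_1,\Delta_2$ in the two branches, which gives recurrences $L(\mu)\leq L(\mu-\Delta_1)+L(\mu-\Delta_2)$ for the number $L(\mu)$ of leaves. A solution $L(\mu)\leq c^{\mu}$ exists provided $c^{-\Delta_1}+c^{-\Delta_2}\leq 1$ holds across every branching case, and one then optimizes the weight vector to minimize the resulting $c$; since $\mu\leq n$ at the root this yields a bound of the form $c^n$ on the number of minimal dominating sets.

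I expect the main obstacle to be twofold. The first difficulty is combinatorial: designing the state space and pruning rules so that the enumeration is simultaneously exhaustive and non-redundant while keeping the number of distinct branching cases finite, so that only finitely many recurrences must be verified. The second is the optimization itself: one must choose the weights so that the worst-case recurrence over all cases delivers exactly the stated base $c=1.7159$, which is a genuine numerical minimization rather than a closed-form calculation. Finally, a matching lower bound --- a disjoint union of a suitable small gadget whose count raised to the $n$-th root approaches $1.7159$ --- would show the exponent is best possible.
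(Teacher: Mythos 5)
The paper does not prove this theorem at all: it is quoted as background, with the proof residing in the cited paper of Fomin, Grandoni, Pyatkin, and Stepanov \cite{FGPS}. Measured against that original proof, your outline is the right strategy and essentially the same one: \cite{FGPS} do pass to the set-cover formulation via the closed neighborhoods $\setof{N[v]}{v\in V(G)}$, enumerate minimal set covers with a branching algorithm (take/discard a set of maximum residual size, with pruning justified by the private-neighbor characterization of minimality), and bound the leaves by a measure-and-conquer analysis in which weights depend on residual cardinalities and frequencies, yielding recurrences whose worst case solves to base $\approx 1.7159$. One structural detail you elide: the measure there charges both roles of each vertex --- as a potential dominator (a set) and as a vertex to be dominated (an element) --- so the root measure is a weighted combination of the $n$ sets and $n$ elements, not a single vertex weight; this matters because the case analysis is driven by set sizes \emph{and} element frequencies jointly.

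That said, as a proof your text has a genuine gap: everything quantitative is deferred. You never specify the weight function, never enumerate the (finitely many) branching cases, and never verify that the optimized recurrences actually produce the constant $1.7159$ --- that number is asserted as the output of an optimization you have not performed, and it is precisely where all the work in \cite{FGPS} lies. A reader cannot check correctness of a measure-and-conquer bound without the explicit measure and the case-by-case verification that $c^{-\Delta_1}+c^{-\Delta_2}\leq 1$ in every case. Separately, your closing expectation of a matching lower bound is not attainable with current knowledge: the best known construction (disjoint copies of the octahedron $K_{2,2,2}$, which has $15$ minimal dominating sets on $6$ vertices) gives only $15^{n/6}\approx 1.5705^n$, and closing the gap to $1.7159^n$ is a well-known open problem, so the exponent in the theorem is not known to be best possible.
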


One of the tools that we will use is entropy.  Recall that if $X$ is a random variable, then the \emph{entropy of $X$} is the following property of the distribution of $X$:
\[
	H(X)=\sum_{x\in \range(X)} \P(X=x)\log \frac{1}{\P(X=x)},
\]
where this logarithm and all other logarithms in this paper are base two.  It is well-known that if $\abs{\range(X)}=n$, then $H(X)\leq \log n$ with equality if and only if $X$ is uniform on its range.  If $X=(X_1,X_2,\ldots,X_n)$ is a random sequence and $A\of [n]$, then we let $X_A=(X_i)_{i\in A}$ be the restriction of $X$ to $A$.  Our main entropy tool will be Shearer's Lemma \cite{CGFS}.

\begin{theorem}[Shearer's Lemma]
	If $X=(X_1,X_2,\ldots,X_n)$ is a random sequence and $\A$ is a multiset of subsets of $[n]$ such that every $i\in [n]$ is in at least $k$ elements of $\A$, then
	\[
		H(X)\leq \frac{1}k \sum_{A\in \A} H(X_A).
	\]
\end{theorem}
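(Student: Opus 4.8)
The plan is to derive the inequality from two standard properties of entropy: the \emph{chain rule} and the fact that \emph{conditioning cannot increase entropy}. Recall that the chain rule states
\[
	H(X)=\sum_{i=1}^n H(X_i\mid X_{[i-1]}),
\]
where $X_{[i-1]}=(X_1,\ldots,X_{i-1})$, the term $H(X_i\mid X_{[i-1]})$ is a conditional entropy, and $X_{[0]}$ is empty. The crucial monotonicity property is that if $S\of T\of [n]$ then $H(X_j\mid X_T)\leq H(X_j\mid X_S)$; conditioning on more variables only decreases uncertainty. I will combine these with the covering hypothesis on $\A$.

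First I would fix a single $A\in\A$ and expand $H(X_A)$ by the chain rule, processing the elements of $A$ in increasing order. Writing $A\cap[i-1]$ for the elements of $A$ smaller than $i$, this gives
\[
	H(X_A)=\sum_{i\in A} H(X_i\mid X_{A\cap[i-1]}).
\]
Since $A\cap[i-1]\of[i-1]$, the monotonicity property yields $H(X_i\mid X_{A\cap[i-1]})\geq H(X_i\mid X_{[i-1]})$ for each $i\in A$. Setting $h_i:=H(X_i\mid X_{[i-1]})$, this shows $H(X_A)\geq \sum_{i\in A} h_i$.

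Next I would sum this bound over all $A\in\A$, counting each member of the multiset with its multiplicity, and then exchange the order of summation:
\[
	\sum_{A\in\A} H(X_A)\geq \sum_{A\in\A}\sum_{i\in A} h_i=\sum_{i=1}^n \card\{A\in\A : i\in A\}\cdot h_i.
\]
Because each $i\in[n]$ lies in at least $k$ members of $\A$, and because $h_i\geq 0$ (conditional entropy is nonnegative), every coefficient $\card\{A\in\A:i\in A\}$ may be replaced by $k$ at the cost of only decreasing the sum. This gives $\sum_{A\in\A} H(X_A)\geq k\sum_{i=1}^n h_i=k\,H(X)$, again by the chain rule, and dividing by $k$ finishes the argument.

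I do not expect a serious obstacle here; the argument is a clean assembly of known facts. The one point demanding care is the bookkeeping when $\A$ is a genuine multiset, so that the double sum correctly records each $i$ with its full multiplicity, together with the observation that the nonnegativity of the $h_i$ is exactly what licenses lowering each of these multiplicities down to the uniform bound $k$.
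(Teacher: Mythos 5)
Your argument is correct and complete; the one thing to know is that the paper itself contains no proof to compare against---Shearer's Lemma is quoted there as a known result with a citation to Chung, Graham, Frankl, and Shearer \cite{CGFS}. What you have written is the now-standard ``dropping conditioning'' proof, usually attributed to Radhakrishnan: expand $H(X_A)$ by the chain rule with the elements of $A$ taken in increasing global order (so that $A\cap[i-1]$ is exactly the set of earlier elements of $A$), weaken each conditioning set $X_{A\cap[i-1]}$ to the full prefix $X_{[i-1]}$ using monotonicity of conditional entropy, and then double-count. This is genuinely different from the original argument in the cited source, which proceeds by repeatedly applying the submodularity inequality $H(X_{A\cup B})+H(X_{A\cap B})\leq H(X_A)+H(X_B)$ to merge pairs of sets in $\A$ until the cover degenerates into $k$ copies of $[n]$. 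Your route is shorter, needs only the chain rule and the fact that conditioning cannot increase entropy, and absorbs the ``at least $k$'' hypothesis (rather than ``exactly $k$'') effortlessly, precisely because the nonnegativity of the terms $h_i=H(X_i\mid X_{[i-1]})$ licenses discarding excess multiplicity---the point you correctly flag, along with the multiset bookkeeping in the exchange of summation, where each index $i$ is counted with multiplicity $\card\setof{A\in\A}{i\in A}\geq k$. Both proofs are standard in the literature; for the way the lemma is used in this paper (with $\A$ the multiset of (closed) neighborhoods, where multiplicities genuinely occur and the covering number is exactly $r$ or $r+1$), your version is entirely adequate and arguably the more transparent choice.
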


In this paper, we use Shearer's Lemma to prove upper bounds on the number of dominating sets and strong dominating sets in regular graphs.  We also show that these bounds extend to weighted dominating sets by generalizing the bounds to what we call legal closed neighborhood colorings.  We prove similar results about weighted strong dominating sets.  We derive some corollaries including results on the chromatic polynomial of the square of a regular graph and the number of rainbow colorings of neighborhood hypergraphs.  

We also generalize strong dominating sets to graph functions called \emph{existence homomorphisms}.  We prove bounds on the number of existence homomorphisms for some image graphs using elementary methods.  We conclude with some open questions.


\section{Dominating sets and strong dominating sets} 
\label{sec:domsets}

In this section, we give best possible bounds on the weighted number of dominating (and strong dominating) sets in an $r$-regular graph on $n$ vertices.  We do this by introducing the idea of a \emph{neighborhood legal coloring} of the vertices of a graph.  However, we will start by giving a quick proof of the unweighted version for dominating sets since it lays bare the essential techniques.  We let $\ds(G)$ be the number of dominating sets in $G$.

\begin{theorem}
	If $G$ is an $r$-regular graph on $n$ vertices, then
	\[
		\ds(G)\leq \ds(K_{r+1})^{n/(r+1)}.
	\]
\end{theorem}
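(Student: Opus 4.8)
The plan is to count dominating sets via entropy, using the uniform distribution on the set of all dominating sets and then applying Shearer's Lemma. Let $\D$ be the collection of dominating sets in $G$, and let $X=(X_v)_{v\in V(G)}$ be the indicator vector of a uniformly random dominating set $S\in\D$; that is, $X_v=\1[v\in S]$. Then $X$ is uniform on $\D$, so $H(X)=\log\ds(G)$, and the goal is to bound $H(X)$ from above by $\frac{n}{r+1}\log\ds(K_{r+1})$.

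First I would choose the right multiset $\A$ of subsets of $V(G)$ to feed into Shearer's Lemma. The natural choice is the multiset of closed neighborhoods, $\A=\{N[v]:v\in V(G)\}$, since $G$ is $r$-regular, each $N[v]$ has exactly $r+1$ elements, and each vertex $u$ lies in $N[v]$ precisely when $v\in N[u]$, hence in exactly $r+1$ of the sets $N[v]$. So the covering parameter is $k=r+1$, and Shearer's Lemma gives
\[
	\log\ds(G)=H(X)\leq\frac{1}{r+1}\sum_{v\in V(G)}H(X_{N[v]}).
\]

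The key step is then to bound each local entropy term $H(X_{N[v]})$. Here $X_{N[v]}$ records which vertices of the closed neighborhood $N[v]$ lie in $S$. The crucial observation is that because $S$ is a dominating set, the restriction of $S$ to $N[v]$ can never be empty: some vertex of $N[v]$ must belong to $S$ in order to dominate $v$. Thus $X_{N[v]}$ is supported on the nonempty subsets of an $(r+1)$-element set, of which there are $2^{r+1}-1$. By the maximum-entropy bound for a variable with bounded range, $H(X_{N[v]})\leq\log(2^{r+1}-1)$. Finally I would identify $2^{r+1}-1=\ds(K_{r+1})$, since in the complete graph $K_{r+1}$ every nonempty vertex subset is dominating (any vertex dominates all of $K_{r+1}$) and the empty set is not, giving exactly $2^{r+1}-1$ dominating sets. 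Combining,
\[
	\log\ds(G)\leq\frac{1}{r+1}\cdot n\cdot\log(2^{r+1}-1)=\frac{n}{r+1}\log\ds(K_{r+1}),
\]
and exponentiating yields the claimed bound.

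I expect the genuinely substantive point to be the local bound $H(X_{N[v]})\leq\log\ds(K_{r+1})$, specifically the recognition that domination forces the local restriction into the nonempty subsets. Everything else—the regularity computation establishing $k=r+1$, and the maximum-entropy inequality—is routine. The only mild subtlety is the degenerate bookkeeping (handling vertices correctly when $r$ is small, or confirming $X$ is genuinely uniform so that $H(X)=\log\ds(G)$), but these present no real difficulty. This clean argument is exactly what one would want as a warm-up before the weighted and ``legal coloring'' generalizations promised in the surrounding text.
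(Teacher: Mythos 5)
Your proposal is correct and follows essentially the same argument as the paper: a uniformly random dominating set, Shearer's Lemma applied to the multiset of closed neighborhoods (each vertex covered exactly $r+1$ times), and the local bound $H(X_{N[v]})\leq\log(2^{r+1}-1)=\log\ds(K_{r+1})$ coming from the fact that domination rules out the empty restriction. No gaps.
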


\begin{proof}
	Let $S$ be a random set chosen uniformly from the dominating sets in $G$, and let $X=(X_v)_{v\in V(G)}$ be its characteristic vector.  We will apply Shearer's Lemma with the multiset $\A$ being the collection $\setof{N[v]}{v\in V(G)}$.  Note that each vertex in $G$ is in exactly $r+1$ sets in $\A$.  Also, since $S$ is a dominating set, it cannot be the case that $X_{N[v]}$ is identically $0$ for any $v\in V(G)$.  Thus, the random variable $X_{N[v]}$ takes at most $2^{r+1}-1$ values.  We have
	\begin{align*}
		\log \ds(G)&=H(X)\\
		&\leq \frac{1}{r+1} \sum_{v\in V(G)} H(X_{N[v]})\\
		&\leq \frac{1}{r+1} \sum_{v\in V(G)} \log(2^{r+1}-1)\\
		&= \frac{n}{r+1} \log(2^{r+1}-1)\\
		&= \frac{n}{r+1} \log(\ds(K_{r+1})),
	\end{align*}
	as we claim.
\end{proof}

The core idea that makes this proof work is that we can tell whether $S$ is a dominating set simply by examining $X_{N[v]}$ for each $v\in V(G)$.  We generalize this idea to vertex colorings of $G$ such that the restrictions to each (closed) neighborhood falls into some class of legal colorings.  The following definition makes this precise.

\begin{definition}
	Let $K$ be a finite set of colors and let $\L$ be a collection of multisets of $K$.  We say that $\phi:V(G)\to K$ is a \emph{$\L$-legal neighborhood coloring} of the graph $G$ if $\phi(N(v))\in \L$ for all $v\in V(G)$.  Note that we are considering the multiset image of $N(v)$ under $\phi$.  Similarly, we say that $\phi:V(G)\to K$ is a \emph{$\L$-legal closed neighborhood coloring} of the graph $G$ if $\phi(N[v])\in \L$ for all $v\in V(G)$.  We let $\l(G,\L)$ be the number of $\L$-legal neighborhood colorings of $G$ and $\lc(G,\L)$ the number of $\L$-legal closed neighborhood colorings of $G$.  We refer to $\L$ as a \emph{coloring condition}.
\end{definition}

\begin{example}
	If we set $K=\set{0,1}$ and let $\L$ be all multisets of $K$ containing at least one $1$, then an $\L$-legal neighborhood coloring of $G$ is precisely the characteristic function of a strong dominating set in $G$.  Similarly, an $\L$-legal closed neighborhood coloring of $G$ is a dominating set in $G$.
\end{example}

In all of our results in this section, we will be considering regular graphs, and therefore only one size of multiset in $\L$ will be relevant.  

It is straightforward to compute $\l(K_{r,r},\L)$ and $\lc(K_{r+1},\L)$ for any coloring condition $\L$, but we need introduce a bit of notation.  Let us write
\[
	N(r,\L)=\card\setof{f:[r]\to K}{f([r])\in \L},
\]
the number of functions from a domain of size $r$ whose multiset image is legal according to $\L$.  For $L\in \L$ of size $r$, we write $\binom{r}{n(L)}$ for the multinomial coefficient\footnote{Note that $n(L)$ is the multiset of repetition counts of elements of $L$.} counting the number of functions $f:[r]\to K$ such that $f([r])=L$.  For instance,
\[
	\binom{5}{n(\set{a,a,b,b,c})}=\binom{5}{2,2,1}=30.
\]
Thus,
\begin{equation*}
	N(r,\L)=\sum_{\substack{L\in \L\\\abs{L}=r}} \binom{r}{n(L)}.
\end{equation*}

An $\L$-legal neighborhood coloring of $K_{r,r}$ simply consists of a coloring that uses an element of $\L$ on each side.  Thus, $\l(K_{r,r},\L)=N(r,\L)^{2}$.  Similarly, an $\L$-legal closed neighborhood coloring of $K_{r+1}$ is one whose image is in $\L$, and so $\lc(K_{r+1},\L)=N(r+1,\L)$.

\begin{theorem}\label{thm:legalcount}
	If $G$ is an $r$-regular graph on $n$ vertices and $\L$ is a coloring condition, then
	\[
		\l(G,\L)\leq N(r,\L)^{n/r}=\l(K_{r,r},\L)^{n/(2r)},
	\]
	and
	\[
		\lc(G,\L)\leq N(r+1,\L)^{n/(r+1)}=\lc(K_{r+1},\L)^{n/(r+1)}.
	\]
\end{theorem}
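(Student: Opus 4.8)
The plan is to mimic the entropy argument given above for dominating sets, applying Shearer's Lemma twice --- once with open neighborhoods and once with closed neighborhoods. For the first inequality, I would let $\phi$ be a uniformly random $\L$-legal neighborhood coloring of $G$ and set $X=(X_v)_{v\in V(G)}$ with $X_v=\phi(v)$, so that $H(X)=\log\l(G,\L)$. I apply Shearer's Lemma with the multiset $\A=\setof{N(v)}{v\in V(G)}$. Since $G$ is $r$-regular, each vertex $u$ lies in exactly those $N(v)$ with $v$ adjacent to $u$, hence in exactly $r$ sets of $\A$, so we may take $k=r$.

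The key observation is that legality bounds the number of values each $X_{N(v)}$ can take. Because $\phi$ is $\L$-legal, we have $\phi(N(v))\in\L$ for every $v$, so the tuple $X_{N(v)}$ ranges only over functions from the $r$-element set $N(v)$ to $K$ whose multiset image lies in $\L$; there are exactly $N(r,\L)$ of these. Thus $H(X_{N(v)})\le\log N(r,\L)$, and Shearer's Lemma yields
\[
	\log\l(G,\L)=H(X)\le\frac1r\sum_{v\in V(G)}H(X_{N(v)})\le\frac{n}{r}\log N(r,\L),
\]
which is the first claimed bound; the identification with $\l(K_{r,r},\L)^{n/(2r)}$ follows from the equality $\l(K_{r,r},\L)=N(r,\L)^2$ recorded above.

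For the closed-neighborhood inequality I would run the identical argument with $\A=\setof{N[v]}{v\in V(G)}$, taking $\phi$ uniform among $\L$-legal \emph{closed} neighborhood colorings. Now each vertex $u$ lies in $N[v]$ precisely when $u=v$ or $u\adj v$, so by $r$-regularity $u$ is covered $r+1$ times and we take $k=r+1$. Legality of $\phi$ forces $\phi(N[v])\in\L$, so $X_{N[v]}$ takes at most $N(r+1,\L)$ values (functions on the $(r+1)$-element set $N[v]$ with legal image), whence $H(X_{N[v]})\le\log N(r+1,\L)$. Shearer's Lemma then gives $\lc(G,\L)\le N(r+1,\L)^{n/(r+1)}=\lc(K_{r+1},\L)^{n/(r+1)}$, using $\lc(K_{r+1},\L)=N(r+1,\L)$.

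I do not anticipate a genuine obstacle here: the content is entirely in correctly reading off the two ingredients Shearer's Lemma needs. The only points requiring care are verifying the covering multiplicities ($r$ for open neighborhoods, $r+1$ for closed ones, both immediate from regularity) and checking that the number of legal tuples on a neighborhood of size $r$ (respectively $r+1$) is exactly $N(r,\L)$ (respectively $N(r+1,\L)$), which is precisely the quantity defined above. The argument is a direct abstraction of the dominating-set proof, with the explicit count $2^{r+1}-1$ replaced by the general $N(r+1,\L)$.
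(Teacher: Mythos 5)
Your proposal is correct and matches the paper's proof essentially verbatim: the paper applies Shearer's Lemma with the covers $\setof{N(v)}{v\in V(G)}$ (multiplicity $r$) and $\setof{N[v]}{v\in V(G)}$ (multiplicity $r+1$), bounding each restricted entropy by $\log N(r,\L)$ and $\log N(r+1,\L)$ respectively, exactly as you do. The paper compresses the closed-neighborhood case to ``\emph{mutatis mutandis},'' but your spelled-out version is the same argument.
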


\begin{proof}
	For the first inequality, let $\phi$ be a random $\L$-legal neighborhood coloring of $G$ chosen uniformly from the set of all such colorings, and let $\A$ be the multiset $\setof{N(v)}{v\in V(G)}$.  Applying Shearer's Lemma, we have
	\begin{align*}
		\log \l(G,\L)&=H(\phi)\\
		&\leq \frac{1}{r} \sum_{v\in V(G)} H(\phi|_{N(v)})\\
		&\leq \frac{1}{r} \sum_{v\in V(G)} \log N(r,\L)\\
		&= \frac{n}{r} \log N(r,\L)\\
		&=\frac{n}{2r} \log N(r,\L)^2\\
		&= \frac{n}{2r} \log(\l(K_{r,r},\L)),
	\end{align*}
	as we claim.
	
	In the proof of the second inequality, the covering we consider is $\A=\setof{N[v]}{v\in V(G)}$.  The rest of the proof is the same, \emph{mutatis mutandis}.
\end{proof}

It is standard in many enumeration problems of this type to consider a weighted version in which there are ``activations'' $\la_x$ associated to each $x\in K$.  We next make this framework precise and prove a weighted version of Theorem~\ref{thm:legalcount}.  

\begin{definition}
	Let $K$ be a finite set and $\la:K\to (0,\infty)$ be an \emph{activation function on $K$}.  We define weighted versions of $N(r,\L)$, $\l$, and $\lc$.  For a graph $G$ and $\phi:V(G)\to K$, let
	\begin{align*}
		\wl(\phi)&=\prod_{v\in V(G)} \la(\phi(v)),\\
		\ll(G,\L)&=\sum \wl(\phi),
		\shortintertext{where the sum is over all $\L$-legal neighborhood colorings $\phi$,}
		\llc(G,\L)&=\sum \wl(\phi),
		\shortintertext{this time summed over all $\L$-legal closed neighborhood colorings $\phi$, and}
		\Nl(r,\L)&=\sum_{\substack{\phi:[r]\to K\\ \phi([r])\in \L}} \wl(\phi)\\
		&=\sum_{\substack{L\in \L\\\abs{L}=r}} \binom{r}{n(L)}\prod_{x\in L} \la(x).
	\end{align*}
\end{definition}

\begin{theorem}\label{thm:legalwt}
	If $G$ is an $r$-regular graph on $n$ vertices, $\L$ is a coloring condition, and $\la:K\to (0,\infty)$ is an activation function, then
	\[
		\ll(G,\L)\leq \Nl(r,\L)^{n/r}=\ll(K_{r,r},\L)^{n/(2r)},
	\]
	and
	\[
		\llc(G,\L)\leq \Nl(r+1,\L)^{n/(r+1)}=\llc(K_{r+1},\L)^{n/(r+1)}.
	\]
\end{theorem}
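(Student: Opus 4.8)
The plan is to mimic the proof of Theorem~\ref{thm:legalcount} but to replace the uniform distribution on legal colorings by the weighted (Gibbs) distribution. First I would sample a random $\L$-legal neighborhood coloring $\phi$ with probability proportional to its weight, that is, $\P(\phi=\psi)=\wl(\psi)/\ll(G,\L)$. Writing $Z=\ll(G,\L)$, the definition of entropy gives the key identity
\[
	\log Z = H(\phi) + \mathbb{E}[\log\wl(\phi)],
\]
since $\P(\phi=\psi)=\wl(\psi)/Z$ means $\log(1/\P(\phi=\psi))=\log Z-\log\wl(\psi)$. This is the weighted analogue of the equality $\log\ll(G,\L)=H(\phi)$ that drove the unweighted proof; the extra expected-log-weight term is exactly what we must carry along.

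The next step is to distribute both terms on the right across the open neighborhoods $N(v)$. For the entropy term, Shearer's Lemma applied to $\A=\setof{N(v)}{v\in V(G)}$, in which each vertex lies in exactly $r$ sets, gives $H(\phi)\leq \tfrac1r\sum_v H(\phi|_{N(v)})$. For the weight term I would use $r$-regularity: since $\log\wl(\phi)=\sum_{u}\log\la(\phi(u))$ and each vertex $u$ lies in exactly $r$ open neighborhoods, we have $\sum_u \log\la(\phi(u)) = \tfrac1r\sum_v\sum_{u\in N(v)}\log\la(\phi(u)) = \tfrac1r\sum_v\log\wl(\phi|_{N(v)})$. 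Taking expectations and combining with Shearer's Lemma yields
\[
	\log Z \leq \frac1r\sum_{v\in V(G)}\Bigl(H(\phi|_{N(v)}) + \mathbb{E}[\log\wl(\phi|_{N(v)})]\Bigr).
\]

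The crux is then a purely local bound on each summand, and this combined entropy-plus-weight estimate is the step I expect to be the main (though not hard) obstacle, since it is what generalizes the bare counting bound $H(\phi|_{N(v)})\leq\log N(r,\L)$ of the unweighted case. Fix $v$ and set $Y=\phi|_{N(v)}$, a random variable supported on the legal colorings $f\colon N(v)\to K$ with $f(N(v))\in\L$, with induced distribution $\nu$. Then $H_\nu(Y)+\mathbb{E}_\nu[\log\wl(Y)]=\sum_f \nu(f)\log\bigl(\wl(f)/\nu(f)\bigr)$, and by Jensen's inequality (equivalently, nonnegativity of relative entropy against the Gibbs measure $f\mapsto\wl(f)/\sum_g\wl(g)$) this is at most $\log\sum_f \wl(f)=\log\Nl(r,\L)$, because $\abs{N(v)}=r$ and this sum is precisely the definition of $\Nl(r,\L)$. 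Substituting gives $\log Z\leq \tfrac{n}{r}\log\Nl(r,\L)$, which is the first inequality; the identity $\ll(K_{r,r},\L)=\Nl(r,\L)^2$, coming from the fact that the weight of a legal coloring of $K_{r,r}$ factors over its two sides, supplies the stated reformulation. For the closed-neighborhood bound one repeats the argument verbatim with $\A=\setof{N[v]}{v\in V(G)}$, where each vertex lies in exactly $r+1$ sets, replacing $r$ by $r+1$ throughout and using $\sum_f\wl(f)=\Nl(r+1,\L)$.
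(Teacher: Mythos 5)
Your proof is correct, but it takes a genuinely different route from the paper's. The paper does not redo the entropy argument at all: it reduces the weighted statement to the unweighted Theorem~\ref{thm:legalcount} by first restricting to rational activities (the general case following by continuity), scaling so that every $\la(x)$ is a positive integer, and then blowing up each color $x$ into $\la(x)$ copies --- setting $K'=\setof{(x,i)}{x\in K,\ i\in[\la(x)]}$ and letting $\L'$ consist of the multisets of $K'$ whose projection to the first coordinate lies in $\L$ --- so that $\ll(G,\L)=\l(G,\L')$, $\llc(G,\L)=\lc(G,\L')$, and $\Nl(s,\L)=N(s,\L')$, after which the unweighted theorem applies verbatim. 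You instead run the entropy argument directly on the Gibbs measure: the identity $\log Z=H(\phi)+\mathbb{E}[\log\wl(\phi)]$, the exact redistribution of the log-weight over neighborhoods via $r$-regularity (each $u$ lies in exactly $r$ sets $N(v)$, so $\log\wl(\phi)=\tfrac1r\sum_v\log\wl(\phi|_{N(v)})$), and the local variational bound $H(Y)+\mathbb{E}[\log\wl(Y)]\leq\log\sum_f\wl(f)\leq\log\Nl(r,\L)$ by Jensen, which is valid since $\phi|_{N(v)}$ is supported on colorings $f$ with $f(N(v))\in\L$; the closed-neighborhood case goes through with $r+1$ in place of $r$, and the endpoint identities $\ll(K_{r,r},\L)=\Nl(r,\L)^2$ and $\llc(K_{r+1},\L)=\Nl(r+1,\L)$ are as you say. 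What each approach buys: yours handles arbitrary positive real activities in a single pass with no rationality or continuity step, and it exposes the free-energy structure of the bound (it is the standard weighted form of Shearer's inequality in the spirit of Galvin and Tetali); the paper's blow-up is shorter on the page, reuses Theorem~\ref{thm:legalcount} as a black box, and needs no entropy inequality beyond the unweighted one. The only point you should add is the degenerate case $\ll(G,\L)=0$ (or $\llc(G,\L)=0$), where the Gibbs distribution is undefined but the claimed inequality holds vacuously.
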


\begin{proof}
	We prove the result for rational weights, the general case follows from continuity.  First observe that if we scale all weights by a positive factor $q$, then both sides of each inequality above scale by $q^{n}$.  Thus, we may clear denominators and assume all weights $\la(x)$ are integers.  We will introduce modified versions of $K$ and $\L$ as follows.  Let 
	\[
		K'=\setof{(x,i)}{x\in K, i\in [\la(x)]},
	\]
	and let $\L'$ be the set of multisets of $K'$ whose image under the projection onto the first coordinate is in $\L$.  We have
	\[
		\ll(G,\L)=\l(G,\L'),\qquad \llc(G,\L)=\lc(G,\L'),\qquad \text{and} \qquad \Nl(s,\L)=N(s,\L'),
	\]
	for any integer $s$ and, in particular, for $s=r,r+1$.  Applying Theorem~\ref{thm:legalcount}, the result follows.  
\end{proof}

\subsection{Examples} 
\label{sub:examples}

We now give a few examples that illustrate the utility of Theorem~\ref{thm:legalwt}.  Recall that $\ds(G)$ is the number of dominating sets in $G$.  We also write $\ds_k(G)$ for the number of dominating sets in $G$ of size $k$.  Similarly, we define $\sds(G)$ and $\sds_k(G)$ to be, respectively, the number of strong dominating sets in $G$ and the number of strong dominating sets in $G$ of size $k$.  We define the \emph{dominating set polynomial}, $D_G(\mu)$, and the \emph{strong dominating set polynomial}, $D_G^s(\mu)$, of a graph $G$ to be the generating functions enumerated by size, i.e.,
\begin{align*}
	D_G(\mu)&=\sum_k \ds_k(G)\mu^{k},\\
	D_G^s(\mu)&=\sum_k \sds_k(G)\mu^{k}.
\end{align*}
We have the following Corollary of Theorem~\ref{thm:legalwt}.

\begin{corollary}
	If $G$ is an $r$-regular graph on $n$ vertices, and $\mu>0$, then
	\begin{align*}
		D_G(\mu)&\leq D_{K_{r+1}}(\mu)^{n/(r+1)},\\
		D_G^s(\mu)&\leq D_{K_{r,r}}^s(\mu)^{n/(2r)}.
	\end{align*}
\end{corollary}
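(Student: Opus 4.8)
The plan is to read both dominating-set polynomials as weighted legal-coloring counts and then invoke Theorem~\ref{thm:legalwt} verbatim. Following the Example, I take $K=\set{0,1}$ and let $\L$ be the collection of all multisets of $K$ containing at least one $1$. With this choice the $\L$-legal closed neighborhood colorings of any graph $G$ are exactly the characteristic vectors of dominating sets of $G$, while the $\L$-legal (open) neighborhood colorings are exactly the characteristic vectors of strong dominating sets.

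The key step is to choose the activation function correctly. I would set $\la\colon K\to(0,\infty)$ by $\la(0)=1$ and $\la(1)=\mu$; here is the only place the hypothesis $\mu>0$ enters, ensuring $\la$ maps into $(0,\infty)$ as required. If $\phi$ is the characteristic vector of a set $S\of V(G)$, then
\[
	\wl(\phi)=\prod_{v\in V(G)}\la(\phi(v))=\mu^{\abs{S}}.
\]
Summing over all dominating sets and over all strong dominating sets respectively gives the two identities
\[
	\llc(G,\L)=\sum_S \mu^{\abs{S}}=D_G(\mu)
	\qquad\text{and}\qquad
	\ll(G,\L)=D_G^s(\mu),
\]
valid for \emph{every} graph $G$.

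With these identities in hand the corollary is immediate. Applying the closed-neighborhood bound of Theorem~\ref{thm:legalwt} and then the identity above to both $G$ and $K_{r+1}$ yields
\[
	D_G(\mu)=\llc(G,\L)\leq \llc(K_{r+1},\L)^{n/(r+1)}=D_{K_{r+1}}(\mu)^{n/(r+1)},
\]
and applying the open-neighborhood bound together with the identity for $G$ and $K_{r,r}$ gives
\[
	D_G^s(\mu)=\ll(G,\L)\leq \ll(K_{r,r},\L)^{n/(2r)}=D_{K_{r,r}}^s(\mu)^{n/(2r)}.
\]

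Since this is a direct corollary, there is no serious obstacle: the entire content is the bookkeeping that confirms $\wl$ of a characteristic vector factors as $\mu^{\abs{S}}$, so that the size-generating function $D_G$ coincides with the weighted count $\llc$ (and likewise $D_G^s$ with $\ll$). The one point warranting care is that the same identity must be applied on the right-hand side to the extremal graphs $K_{r+1}$ and $K_{r,r}$, which is exactly what converts the abstract bound of Theorem~\ref{thm:legalwt} into a statement about dominating-set polynomials.
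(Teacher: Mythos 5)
Your proposal is correct and follows essentially the same route as the paper: both choose $K=\set{0,1}$, the activation $\la(0)=1$, $\la(1)=\mu$, and the coloring condition ``at least one $1$'' to identify $D_G(\mu)=\llc(G,\L)$ and $D_G^s(\mu)=\ll(G,\L)$, then apply Theorem~\ref{thm:legalwt}. Your write-up just makes explicit the bookkeeping ($\wl(\phi)=\mu^{\abs{S}}$ and the application of the identity to $K_{r+1}$ and $K_{r,r}$) that the paper leaves implicit.
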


\begin{proof}
	Let $K=\set{0,1}$ and $\la:K\to (0,\infty)$ be defined by $\la(0)=1$ and $\la(1)=\mu$.  Also, let $\D$ be the collection of all multisets from $K$ containing at least one $1$.  Then $D_G(\mu)=\llc(G,\D)$ and $D_G^s(\mu)=\ll(G,\D)$.  Thus, by Theorem~\ref{thm:legalwt}, we are done.
\end{proof}

A broad class of other applications come from vertex colorings of hypergraphs associated to a graph $G$.  Given a graph $G$, we define $\N(G)$ to be the hypergraph on vertex set $V(G)$ with edge set $\setof{N(v)}{v\in V(G)}$.  Note that if $G$ is $r$-regular, then $\N(G)$ is $r$-uniform.  Analogously, we define $\Nc(G)$ to be the hypergraph of the closed neighborhoods of $G$, so that if $G$ is $r$-regular, $\Nc(G)$ is $(r+1)$-uniform.  Vertex colorings of $G$ are also vertex colorings of $\N(G)$ and $\Nc(G)$.  Various conditions on these hypergraph colorings give rise to invariants of $G$ that are amenable to our techniques.

Recall that a vertex coloring of a hypergraph $\Hy$ is \emph{proper} if no edge is monochromatic, and \emph{rainbow} if no edge contains a repeated color.  We let $\chi(\Hy;q)$ be the number of proper $q$-colorings of $\Hy$ and $\chr(\Hy;q)$ be the number of rainbow colorings of $\Hy$.

\begin{theorem}\label{thm:prorain}
	Let $G$ be an $r$-regular graph on $n$ vertices and $q\geq 1$ be an integer, then
	\begin{align*}
		\chi(\N(G);q)&\leq \chi(\N(K_{r,r});q)^{n/(2r)}=(q^r-q)^{n/r},\\
		\chi(\Nc(G);q)&\leq \chi(\Nc(K_{r+1});q)^{n/(r+1)}=(q^{r+1}-q)^{n/(r+1)},\\
		\chr(\N(G);q)&\leq \chr(\N(K_{r,r});q)^{n/(2r)}=(q(q-1)\cdots(q-r+1))^{n/r},\\
		\chr(\Nc(G);q)&\leq \chr(\Nc(K_{r+1});q)^{n/(r+1)}=(q(q-1)\cdots(q-r))^{n/(r+1)}.
	\end{align*}
\end{theorem}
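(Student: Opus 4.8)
The plan is to realize each of the four quantities as a count of $\L$-legal (open or closed) neighborhood colorings for a suitably chosen coloring condition $\L$ over the color set $K=[q]$, and then invoke Theorem~\ref{thm:legalcount}. The only work beyond this reduction is the elementary computation of $N(r,\L)$ and $N(r+1,\L)$ in each of the two cases, together with the evaluation of the right-hand sides on $K_{r,r}$ and $K_{r+1}$.

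For the proper-coloring bounds, I would take $\L$ to be the collection of all multisets of $[q]$ that are not monochromatic. A coloring $\phi:V(G)\to[q]$ is then $\L$-legal as an open neighborhood coloring precisely when no edge $N(v)$ of $\N(G)$ is monochromatic, i.e.\ precisely when $\phi$ is a proper coloring of $\N(G)$; the analogous statement holds for $\Nc(G)$ with closed neighborhoods. Since a function $f:[r]\to[q]$ has monochromatic image in exactly $q$ ways, we get $N(r,\L)=q^r-q$ and $N(r+1,\L)=q^{r+1}-q$. Theorem~\ref{thm:legalcount} then yields the first two inequalities, and evaluating on $K_{r,r}$ and $K_{r+1}$ (where the relevant neighborhood is a full side, resp.\ the whole vertex set) confirms the claimed extremal values $\chi(\N(K_{r,r});q)=(q^r-q)^2$ and $\chi(\Nc(K_{r+1});q)=q^{r+1}-q$.

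For the rainbow bounds, I would instead let $\L$ consist of those multisets of $[q]$ in which no color is repeated. Then an $\L$-legal open (resp.\ closed) neighborhood coloring is exactly a coloring in which every $N(v)$ (resp.\ $N[v]$) receives distinct colors, that is, a rainbow coloring of $\N(G)$ (resp.\ $\Nc(G)$). Here $N(r,\L)$ counts injections $[r]\to[q]$, so $N(r,\L)=q(q-1)\cdots(q-r+1)$ and $N(r+1,\L)=q(q-1)\cdots(q-r)$, both vanishing when $q$ is too small, consistent with the nonexistence of a rainbow coloring in that regime. Applying Theorem~\ref{thm:legalcount} and evaluating on the extremal graphs gives the remaining two inequalities.

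I do not expect a genuine obstacle here: the content lies entirely in recognizing that \emph{proper} and \emph{rainbow} are both conditions on the multiset of colors appearing in a neighborhood, so they fit the $\L$-legal framework verbatim. The only points requiring care are bookkeeping ones---working with multiset images rather than ordered tuples when defining $\L$, and keeping the open-neighborhood ($r$-uniform) case separate from the closed-neighborhood ($(r+1)$-uniform) case so that the correct arity $r$ or $r+1$ is fed into $N(\cdot,\L)$.
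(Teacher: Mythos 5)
Your proposal is correct and matches the paper's proof essentially verbatim: the paper also takes $K=[q]$, lets the coloring condition be the multisets containing at least two distinct colors (proper) and the repetition-free multisets, i.e.\ subsets of $[q]$ (rainbow), and then applies Theorem~\ref{thm:legalcount} to the open- and closed-neighborhood cases. Your explicit computations $N(r,\L)=q^r-q$ and $N(r,\L)=q(q-1)\cdots(q-r+1)$, and the evaluations on $K_{r,r}$ and $K_{r+1}$, are exactly the bookkeeping the paper leaves implicit via $\l(K_{r,r},\L)=N(r,\L)^2$ and $\lc(K_{r+1},\L)=N(r+1,\L)$.
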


\begin{proof}
	Let $K=[q]$, $\Pr$ be the collection of all multisets from $K$ that contain at least two different colors, and $\Ra$ be the collection of all subsets of $K$, i.e., multisets all of whose elements are distinct.  The number of proper $q$-colorings of $\N(G)$ (respectively, $\Nc(G)$) is exactly $\l(G,\Pr)$ (respectively, $\lc(G,\Pr)$).  Similarly, the number of rainbow $q$-colorings of $\N(G)$ (resp., $\Nc(G)$) is exactly $\l(G,\Ra)$ (resp., $\lc(G,\Ra)$).  Thus, the result follows from Theorem~\ref{thm:legalcount}.
\end{proof}

Note that a weighted version of Theorem~\ref{thm:prorain} follows equally immediately from Theorem~\ref{thm:legalwt}.



\section{Existence homomorphisms} 
\label{sec:xhoms}

Some of our earlier results are rather naturally phrased in terms of what we call ``existence homomorphisms''.  

\begin{definition}
	Suppose that $G$ and $H$ are graphs with $H$ possibly having loops.  We say that a map $\phi:V(G)\to V(H)$ is an \emph{existence homomorphism} if, for every $v\in V(G)$, there is a $w\in N(v)$ such that $\phi(v)\phi(w)\in E(H)$.  We let $\XHom(G,H)$ be the set of all existence homomorphisms from $G$ to $H$ and set $\xhom(G,H)=\abs{\XHom(G,H)}$.
\end{definition}

\begin{example}
	Let $H$ be $\hind$ with $0$ the unlooped vertex and $1$ the looped one.  If $G$ is a graph of minimum degree at least one, then elements of $\XHom(G,H)$ are precisely the characteristic functions of dominating sets of $G$.  Every vertex that is mapped to $0$ must have some neighbor that is mapped to $1$.  Since vertices mapped to $1$ have some neighbor in the graph, the existence homomorphism condition is satisfied at those vertices\footnote{Note that if $G$ does contain isolated vertices, then $\XHom(G,\hind)=0$.  However, if $S$ is the set of isolates in $G$, then $\ds(G)=\ds(G\wo S)=\xhom(G\wo S)$.}.  Thus, $\xhom(G,H)=\ds(G)$.
\end{example}

\begin{example}
	Existence homomorphisms into complete graphs correspond to proper closed neighborhood colorings.  To be precise, if $\phi\in \XHom(G,K_q)$, then for each vertex $v$, there must be a neighbor $w$ with $\phi(v)\neq \phi(w)$.  Hence, $\phi$ is a proper closed neighborhood coloring of $G$, i.e., a proper vertex coloring of $\Nc(G)$.  Conversely, if $\phi$ is a proper closed neighborhood coloring of $G$, then for all vertices $v$, at least two colors appear on $N[v]$.  In particular, it cannot be that all the colors on $N(v)$ are the same as $\phi(v)$.  Thus, $\phi\in \XHom(G,K_q)$.  Summarizing, we have $\xhom(G,K_q)=\chi(\Nc(G);q)$. 
\end{example}

There seem to be some very interesting problems involving maximizing $\xhom(G,H)$ for fixed $H$.  We discuss below two examples of such problems.  In both cases, we consider $H=\Eq$, the graph on $q$ vertices with a loop on every vertex and no other edges.  We can think of an existence homomorphism into $\Eq$ as a $q$-coloring in which each vertex is adjacent to another vertex of the same color.  In other words, an ordered partition of $G$ into $q$ parts in which every non-trivial part has minimum degree at least one.

The first problem we discuss is that maximizing $\xhom(T,\Eq[2])$ where $T$ is a tree on $n$ vertices.  For convenience, we let $\id(G)=\xhom(G,\Eq[2])$.  We need, first, a simple lemma about $\id(P_n)$.  Clearly, $\id(G)$ is even for every graph $G$ since switching the colors in any existence homomorphism into $\Eq[2]$ gives another such.  In this section, it will be convenient for us to set $i_n=\frac{1}2 \id(P_n)$.  We will think of $i_n$ as being the number of $\phi\in \XHom(P_n,\Eq[2])$ having a specified color at the left-hand end.  We denote the Fibonacci numbers as $F_n$, where $F_0=1$, $F_1=1$, and $F_n=F_{n-1}+F_{n-2}$ for $n\geq 2$.

\begin{lemma}
	For any integer $n\geq 2$, 
	\[
		i_n=F_{n-2}.
	\]
\end{lemma}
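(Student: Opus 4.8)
The plan is to first unwind the definition of an existence homomorphism into $\Eq[2]$. Since the only edges of $\Eq[2]$ are its two loops, the condition $\phi(v)\phi(w)\in E(\Eq[2])$ holds exactly when $\phi(v)=\phi(w)$. Hence $\phi\in\XHom(P_n,\Eq[2])$ precisely when every vertex of $P_n$ has a neighbor of its own color, i.e.\ $\phi$ is a $2$-coloring of the path in which each vertex sees its own color on an adjacent vertex. On a path this is equivalent to the purely combinatorial condition that every maximal monochromatic run of $\phi$ has length at least $2$: a vertex in a run of length $1$ has no same-colored neighbor, whereas every vertex in a run of length $\geq 2$ has one.

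Next I would translate the counting problem into a composition count. Fixing the color of the left-hand vertex $v_1$ (which is exactly what $i_n$ records, the color swap being a fixed-point-free involution), a valid coloring is completely determined by the sequence of its run lengths $k_1,k_2,\dots,k_m$, since the run colors must alternate starting from the fixed color. Thus $i_n$ equals the number of compositions $n=k_1+\cdots+k_m$ with every part $k_j\geq 2$. The forced equalities $v_2=v_1$ and $v_{n-1}=v_n$ at the two ends are automatically encoded by the parts-$\geq 2$ condition.

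Then I would establish the Fibonacci recurrence by conditioning on the length $k$ of the first run. This gives $i_n=\sum i_{n-k}$, where $k$ ranges over $2\le k\le n$ with $n-k\neq 1$ and we set $i_0=1$ to account for $k=n$ (the whole path monochromatic); the value $n-k=1$ is excluded since it would leave a single leftover vertex forming an illegal run of length $1$. Writing $i_n=i_0+\sum_{m=2}^{n-2} i_m$ and subtracting the analogous expression for $i_{n-1}$ cancels all terms but one, yielding $i_n=i_{n-1}+i_{n-2}$ for $n\geq 4$. (Equivalently, one can give the standard bijection on the last part of a composition into parts $\geq 2$.)

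Finally, I would check base cases and match indices. Direct inspection gives $i_2=1$ (only $v_1v_2$ monochromatic) and $i_3=1$ (only the all-one-color coloring), which equal $F_0$ and $F_1$ under the paper's convention $F_0=F_1=1$. Since $F_{n-2}=F_{n-3}+F_{n-4}$ satisfies the same recurrence with the same two initial values, induction gives $i_n=F_{n-2}$ for all $n\geq 2$. The main obstacle I anticipate is not substantive but purely bookkeeping: pinning down the index shift in $F_{n-2}$ together with the degenerate cases—the whole-path term $i_0=1$, the forbidden leftover single vertex ($n-k=1$), and the forced colorings at the two endpoints—rather than any genuine difficulty.
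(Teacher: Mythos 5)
Your proof is correct, but it takes a somewhat different route from the paper's. The paper argues directly and locally: with the left end colored $a$, the second vertex is forced to be $a$ (it is the first vertex's only candidate for a same-colored neighbor), and splitting on whether the third vertex is $a$ or $b$ gives, by deleting one or two initial vertices, exactly $i_{n-1}$ and $i_{n-2}$ colorings respectively --- the recurrence $i_n=i_{n-1}+i_{n-2}$ in one step, with the base cases $i_2=i_3=1$ checked by hand. You instead first reformulate legality structurally --- every maximal monochromatic run has length at least $2$, so with the left color fixed, $i_n$ counts compositions of $n$ into parts $\geq 2$ --- then condition on the first run to get the full-history recurrence $i_n=1+\sum_{m=2}^{n-2} i_m$ and telescope against the expression for $i_{n-1}$ to recover $i_n=i_{n-1}+i_{n-2}$ for $n\geq 4$. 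Your bookkeeping checks out: the exclusion of the leftover single vertex ($n-k=1$), the convention $i_0=1$ for the monochromatic path, and the index match with $F_0=F_1=1$ are all handled correctly (e.g.\ $i_4=1+i_2=2=F_2$). What your detour buys is a clean closed combinatorial interpretation of $i_n$ as a classical Fibonacci-counted family of compositions; what it costs is the extra degenerate-case handling that the paper's two-case split avoids. Note that the ``standard bijection on the last part'' you mention parenthetically is essentially the paper's argument, applied to the first part rather than the last: first run of length exactly $2$ corresponds to the $i_{n-2}$ case, first run of length at least $3$ to the $i_{n-1}$ case. Both arguments are complete and correct.
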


\begin{proof}
	It is simple to check that $i_2=i_3=1$.  We label the vertices of $\Eq[2]$ as $a$ and $b$.  If $n\geq 4$ and $\phi\in \XHom(G,\Eq[2])$ uses $a$ on the left-hand end, then the adjacent vertices can be colored either $a$ then $a$, or $a$ then $b$.  There are exactly $i_{n-1}$ of the first type and $i_{n-2}$ of the second.
\end{proof}

\begin{proposition}
	If $T$ is a tree on $n$ vertices, then
	\[
		\id(T)\leq \id(P_n),
	\]
	with equality if and only if $T=P_n$.
\end{proposition}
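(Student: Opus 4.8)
The plan is to reduce the problem to a single local move via a rooted transfer recursion, and then to show that the move weakly increases $\id$ by way of a Fibonacci convolution identity. For a tree rooted at $\rho$, let $f$ be the number of colorings $\phi\in\XHom(T,\Eq[2])$ in which every non-root vertex has a same-colored neighbor \emph{and} $\rho$ is satisfied by one of its children, and let $g$ be the number in which every non-root vertex is satisfied but $\rho$ is not (so $\rho$ would need a same-colored parent); by the color-swap symmetry neither depends on $\rho$'s color. Since the root has no parent, $\id(T)=2f$. If $\rho$ has child branches with parameters $(f_i,g_i)$, it is convenient to record each branch by the pair $(S_i,D_i)=(f_i+g_i,\,f_i)$ — the numbers of legal colorings of the branch in which its root agrees with, respectively differs from, the color of its external parent — and then $f=\prod_i(S_i+D_i)-\prod_iD_i$ and $g=\prod_iD_i$. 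The same computation as in the Lemma shows that a pendant path on $m$ vertices has $(S^{(m)},D^{(m)})=(F_{m-1},F_{m-2})$, so that $S^{(m)}+D^{(m)}=F_m$ and $D^{(m)}=F_{m-2}$, using the conventions $F_0=F_1=1$ and $F_{-1}=0$.

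The key lemma is a \emph{path-merging} move. Suppose a vertex $v$ carries two pendant paths, on $a$ and $b$ vertices respectively, together with some other branches; let $T'$ be obtained by detaching the second path and reattaching it at the far end of the first, so that $v$ now carries a single pendant path on $a+b$ vertices. Writing $P=\prod(S_i+D_i)$ and $Q=\prod D_i$ for the products over the remaining branches at $v$, the combination rule gives
\[
	\id(T)=2\bigl(P\,F_aF_b-Q\,F_{a-2}F_{b-2}\bigr),\qquad \id(T')=2\bigl(P\,F_{a+b}-Q\,F_{a+b-2}\bigr).
\]
The convolution identity $F_{a+b}=F_aF_b+F_{a-1}F_{b-1}$ (and the same identity with $a,b$ replaced by $a-1,b-1$) collapses both differences to the single quantity $F_{a-1}F_{b-1}$, yielding the clean formula $\id(T')-\id(T)=2(P-Q)F_{a-1}F_{b-1}$.

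It remains to check positivity and to organize the induction. Coloring an entire branch with its parent's color is always legal, so $S_i\ge 1$ for every branch; hence each factor of $P$ strictly exceeds the corresponding factor of $Q$, and whenever the set of remaining branches at $v$ is nonempty this forces $P>Q$. Since $a,b\ge 1$ we also have $F_{a-1},F_{b-1}\ge 1$, so the move \emph{strictly} increases $\id$. To finish, if $T\neq P_n$ then $T$ is not a path; rooting at a leaf and choosing a branch vertex $v$ of maximum depth, all downward branches of $v$ are pendant paths and there are at least two of them, while the upward branch forms a nonempty set of remaining branches. Merging two downward pendant paths preserves $n$, lowers the number of leaves by one, and strictly increases $\id$; iterating terminates at $P_n$, the unique tree on $n$ vertices with exactly two leaves. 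Therefore $\id(T)<\id(P_n)$ for every $T\neq P_n$, which is the claim.

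I expect the main obstacle to be establishing the merge inequality in the collapsed form $2(P-Q)F_{a-1}F_{b-1}$: this rests on the pendant-path transfer data being exactly $(F_{m-1},F_{m-2})$ and on the two Fibonacci differences reducing to the same quantity through the convolution identity. The secondary point needing care is strictness — verifying $S_i\ge 1$ and hence $P>Q$ — since that is precisely what delivers the uniqueness clause $T=P_n$.
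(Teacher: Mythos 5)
Your proof is correct and takes essentially the same route as the paper: the same move of merging two pendant paths at a vertex of degree at least three, with the pendant-path data $(F_{m-1},F_{m-2})$ and the Fibonacci convolution identity collapsing the difference to $2(P-Q)F_{a-1}F_{b-1}$, which is exactly the paper's $2\,i_S\,i_{k+1}i_{\ell+1}$ (your $P-Q$ is the paper's $i_S$, and your $P$ is its $j_S$). Your transfer-pair bookkeeping and the explicit deepest-branch-vertex iteration with decreasing leaf count simply spell out what the paper compresses into ``by a simple argument.''
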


\begin{proof}
	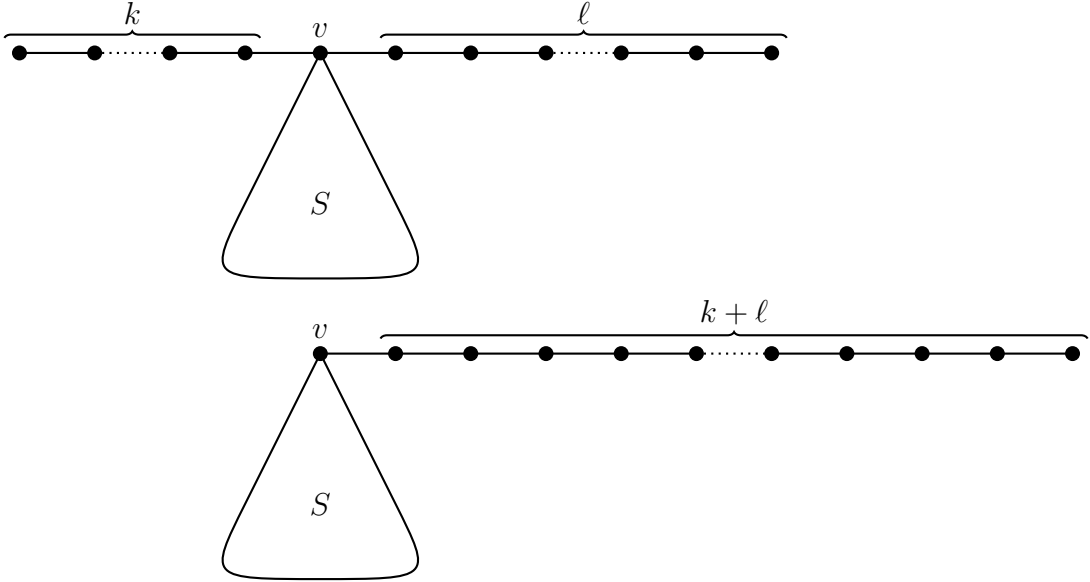
\begin{figure}[ht]
		\begin{tikzpicture}[
				auto=left,
				every path/.style={thick},
				vx/.style={circle,fill=black,inner sep=2pt},
				brace label/.style={midway,inner sep=5pt}
			]
			\draw (0,0) -- (1,-2) ..controls (1.5,-3)  .. (0,-3)
								  		 ..controls (-1.5,-3) .. (-1,-2)
										 -- cycle;
			\node at (0,0.3) {$v$};
			\node at (0,-2) {$S$};
			\draw (-4,0) -- (-3,0) (-2,0) -- (3,0) (4,0) -- (6,0);
			\draw[dotted] (-3,0)--(-2,0) (3,0)--(4,0);
			\draw[decoration={brace}, decorate] (-4.2,0.2) -- (-0.8,0.2) node[brace label] {$k$};
			\draw[decoration={brace}, decorate] (0.8,0.2) -- (6.2,0.2) node[brace label] {$\ell$};
			\foreach \i in {-4,-3,...,6}
				\node[vx] at (\i,0) {};
			\begin{scope}[yshift=-4cm]
				\draw (0,0) -- (1,-2) ..controls (1.5,-3)  .. (0,-3)
									  		 ..controls (-1.5,-3) .. (-1,-2)
											 -- cycle;
				\node at (0,0.3) {$v$};
				\node at (0,-2) {$S$};
				\draw (0,0) -- (5,0) (6,0) -- (10,0);
				\draw[dotted] (5,0)--(6,0);
				\draw[decoration={brace}, decorate] (0.8,0.2) -- (10.2,0.2) node[brace label] {$k+\ell$};
				\foreach \i in {0,1,...,10}
					\node[vx] at (\i,0) {};
			\end{scope}
		\end{tikzpicture}
		\caption{The basic operation transforming $T$ into $T'$}\label{fig:basicop}
	\end{figure}
	We will prove that if $T$ is a tree on $n$ vertices maximizing $\id$, then $T$ cannot contain a vertex of degree at least three having at least two pendant paths.  This, then, by a simple argument, proves that $T=P_n$.  Suppose that, as in Figure~\ref{fig:basicop}, $v$ is a vertex of degree at least three having pendant paths of lengths $k$ and $\ell$.  We let $T'$ be the tree in which these paths are replaced by a single pendant path incident with $v$ of length $k+\ell$.  We will prove that $\id(T')>\id(T)$.  Let us write $S$ for $T$ with the pendant paths, but not $v$, removed (so that $v\in S$).  Since $d_T(v)\geq 3$, we have $n(S)>1$.  For convenience, we will count legal colorings of $T$ and $T'$ in which $v$ is colored $0$.  Let us write $i_S=\frac{1}2 \id(S)$ for the number of legal colorings of $S$ in which $v$ is colored $0$.  Also, we set $j_S$ to be the number of $\set{0,1}$-colorings of $S$ in which every vertex \emph{other than $v$} has a neighbor of the same color and $v$ is colored $0$.  We analyze legal colorings of $T$ and $T'$ according to whether the paths contain a vertex colored $0$ adjacent to $v$.  Thus,
	\begin{align*}
		\frac{1}2\id(T)&=j_S(i_{k+1}i_{\ell+1}+i_{k}i_{\ell+1}+i_{k+1}i_{\ell})+i_S i_k i_{\ell},\\
		\frac{1}2\id(T')&=j_S i_{k+\ell+1}+i_S i_{k+\ell}.
	\end{align*}
	These follow from the fact that the number of legal colorings of a pendant path of length $m$ in which the vertex adjacent to $v$ is colored $0$ is $i_{m+1}$, whereas the number in which that vertex is colored $1$ is $i_m$.  Now, using standard facts about the Fibonacci numbers, we have
	\begin{align*}
		i_{k+\ell+1}&=F_{k+\ell-1}=F_k F_{\ell-1}+F_{k-1} F_{\ell-2}=F_{k-1} F_{\ell-1} + F_{k-2} F_{\ell-1}+F_{k-1} F_{\ell-2}\\
		&=i_{k+1}i_{\ell+1}+i_k i_{\ell+1}+i_{k+1}i_{\ell}, \shortintertext{and}
		i_{k+\ell}&=F_{k+\ell-2}=F_{k-1}F_{\ell-1}+F_{k-2}F_{\ell-2}=i_{k+1}i_{\ell+1}+i_k i_{\ell}\\
		&> i_k i_{\ell}.
	\end{align*}
	Thus, $\id(T')>\id(T)$ provided $\id(S)>0$, i.e., $n(S)>1$.
\end{proof}

Finally, we mention a simple result giving a bound for $\xhom(G,\Eq[2])$ for $2$-regular graphs.  It would be very interesting to give a corresponding bound for $r$-regular graphs.

\begin{proposition}
	If $G$ is a $2$-regular graph on $n$ vertices, then
	\[
		\xhom(G,\Eq[2])\leq \xhom(C_6,\Eq[2])^{n/6}.
	\]
\end{proposition}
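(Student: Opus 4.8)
The plan is to reduce to a single-cycle computation and then to a pointwise inequality. Since a $2$-regular graph is a disjoint union of cycles $C_{n_1},\dots,C_{n_k}$ with $\sum_i n_i = n$ and each $n_i\ge 3$, and since the existence-homomorphism condition is local to each vertex together with its neighbors, a map is an existence homomorphism exactly when its restriction to each component is. Hence $\xhom(G,\Eq[2]) = \prod_i \xhom(C_{n_i},\Eq[2])$. Writing $g(m) = \xhom(C_m,\Eq[2])$, the target inequality follows immediately from the pointwise bound $g(m)\le g(6)^{m/6}$ for every $m\ge 3$, since then $\prod_i g(n_i) \le \prod_i g(6)^{n_i/6} = g(6)^{n/6}$.

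So first I would pin down $g(m)$. An existence homomorphism $\phi\colon V(C_m)\to\{a,b\}$ into $\Eq[2]$ is precisely a $2$-coloring in which every vertex shares its color with a neighbor; reading the colors cyclically, this says that every maximal monochromatic run has length at least $2$. I would count these with a transfer matrix $M$ on the four states given by consecutive color pairs, where the transition $(x,y)\to(y,z)$ is permitted exactly when the middle vertex is not an isolated color (that is, $x=y$ or $z=y$), so that $g(m)=\operatorname{tr}(M^m)$. A short computation gives characteristic polynomial $\lambda^4-2\lambda^3+\lambda^2-1 = (\lambda^2-\lambda-1)(\lambda^2-\lambda+1)$, whose roots are the golden ratio $\varphi=(1+\sqrt5)/2$ and its conjugate $\psi$, together with the two primitive sixth roots of unity. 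Consequently $g(m) = L_m + 2\cos(m\pi/3)$, where $L_m$ is the $m$-th Lucas number; in particular $g(6)=L_6+2=20$, and the same formula (or the recurrence $g(m)=2g(m-1)-g(m-2)+g(m-4)$) yields $g(3)=2$, $g(4)=6$, $g(5)=12$.

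It then remains to verify $g(m)\le 20^{m/6}$ for all $m\ge 3$, which is the crux of the argument. The decisive fact is the growth rate: $\varphi^6 = L_6-\psi^6 < 18 < 20$, so $\varphi < 20^{1/6}\approx 1.6476$, and thus the dominant term $\varphi^m$ of $g(m)$ grows strictly slower than $20^{m/6}$. For large $m$ one bounds $g(m)\le \varphi^m+\psi^m+2 \le \varphi^m+3$ and observes that $20^{m/6}-\varphi^m\to\infty$, since the ratio $20^{m/6}/\varphi^m = (20^{1/6}/\varphi)^m$ grows geometrically; hence the inequality holds for all $m$ past a small threshold. The small cases $m=3,4,5$ are checked directly from the values above, and $m=6$ gives equality. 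The main obstacle is precisely that the bound is genuinely tight: $g(5)=12$ against $20^{5/6}\approx 12.14$, and $g(6)=20$ exactly, so the oscillating term $2\cos(m\pi/3)$ cannot be discarded carelessly. I would handle this either by organizing the verification according to the residue of $m$ modulo $6$, or by absorbing the term into the crude bound $g(m)\le\varphi^m+3$ and confirming the finitely many borderline values by hand.
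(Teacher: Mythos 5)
Your proposal is correct and follows essentially the same route as the paper: decompose the $2$-regular graph into disjoint cycles, use multiplicativity of $\xhom(\cdot,\Eq[2])$ over components, and reduce to the pointwise bound $\xhom(C_m,\Eq[2])\le \xhom(C_6,\Eq[2])^{m/6}$, which both arguments settle by comparing the golden-ratio growth rate ($\varphi < 20^{1/6}$) against a direct check of small cases. The only real difference is that the paper cites the Agur--Fraenkel--Klein recurrence $c_m = 2c_{m-1}-c_{m-2}+c_{m-4}$ and argues from $c_m^{1/m}\to\varphi$, whereas you derive the equivalent transfer-matrix closed form $c_m = L_m + 2\cos(m\pi/3)$ from scratch; your version is thus self-contained and makes the near-tightness at $m=5$ (where $12 < 20^{5/6}\approx 12.14$) and the equality at $m=6$ completely transparent.
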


\begin{proof}
	In \cite{AFK,AFKerr}, Agur, Fraenkel, and Klein prove that $c_n=\xhom(C_n,\Eq[2])$ satisfies the recurrence 
	\[
		c_n=2c_{n-1}-c_{n-2}+c_{n-4},\qquad n\geq 7.
	\]
	From this, it is straightforward to determine that $c_n^{1/n}\to \phi$, the golden ratio, as $n\to \infty$.  Checking small values, one finds that $c_n^{1/n}$ is maximized at $n=6$.  Since $\xhom(\cdot,\Eq[2])$ is multiplicative on disjoint unions, this proves the result.
\end{proof}


\bibliographystyle{amsplain}
\bibliography{existhom}

\end{document}